\DeclareMathSymbol{\R}{\mathalpha}{AMSb}{"52}
\DeclareMathSymbol{\C}{\mathalpha}{AMSb}{"43}
\DeclareMathSymbol{\D}{\mathalpha}{AMSb}{"44}
\DeclareMathOperator{\re}{Re}
\DeclareMathOperator{\im}{Im}
\newcommand{\SHO}{S_{\scriptscriptstyle H}^{\scriptscriptstyle O}}
\newcommand{\KHO}{K_{\scriptscriptstyle H}^{\scriptscriptstyle O}}
\newcommand{\CHO}{C_{\scriptscriptstyle H}^{\scriptscriptstyle O}}
\newcommand{\THHO}{T_{\scriptscriptstyle H}^{\scriptscriptstyle O}}
\newcommand{\TRHH}{T_{\scriptscriptstyle H}}
\newcommand{\TRH}{{\mathcal T}_{\scriptscriptstyle H}}
\newcommand{\THO}{{\mathcal T}_{\scriptscriptstyle H}^{\scriptscriptstyle O}}
\newcommand{\ol}{\overline}
\newcommand{\g}{\overline{g}}
\newcommand{\z}{\overline{z}}
\newcommand{\p}{\partial}
\theoremstyle{plain}
\newtheorem{thm}{Theorem}
\newtheorem*{cor}{Corollary}
\theoremstyle{definition}
\newtheorem{dfn}{Definition}
\newtheorem*{pro}{Proposition}
\newtheorem*{dfnA}{Definition [Clunie and Sheil-Small]}
\newtheorem{rmk}{Remark}
\newtheorem*{exam}{Example}
\begin{document}

\title[]{Typically Real Harmonic Functions}
\date{January 31, 2008}

\author[Dorff]{Michael Dorff}
\email{mdorff@math.byu.edu}
\address{Department of Mathematics, Brigham Young
University, Provo, Utah 84604, USA}

\author[Nowak]{Maria Nowak}
\email{nowakm@golem.umcs.lublin.pl}
\address{Instytut Matematyki, UMCS, pl. Marii Curie-Sk\l odowskiej
1, 20-031 Lublin, Poland}

\author[Szapiel]{Wojciech Szapiel}
\email{szawoj@kul.lublin.pl}
\address{Instytut Matematyki i Informatyki, KUL, ul. Konstantyn\'ow 1
H, 20-708 Lublin, Poland}

\keywords{Harmonic Mappings, Typically Real, Univalence}
\subjclass{30C45}

\begin{abstract}
We consider a class $\THO$ of typically real harmonic
functions on the unit disk that contains the class of  normalized
analytic and  typically real functions. We also obtain  some
partial results about the region of univalence for this class.
\end{abstract}

\maketitle

\section{Introduction}



A planar harmonic mapping is a complex-valued function $f=u+iv$, for which both $u$
and $v$ are real harmonic. If $G$ is simply connected, then $f$
can be written as $f=h+\overline g$, where $h$ and $g$ are
analytic  on $G$. The reader is referred to \cite{dur} for many
interesting results on planar harmonic mappings. Throughout this paper we
will discuss harmonic functions on the unit disk $\mathbb D$.  In analogue to the
classical family $S$ of normalized analytic schlicht functions and its subfamilies $K$ 
of convex mappings and $C$ of close-to-convex mappings, 
Clunie and Sheil-Small \cite{css} introduced the class $\SHO=
\{f : \D \rightarrow {\mathbb C}~\big|~f \mbox{ is harmonic, 
univalent with } f(0)=h(0)=0, f_{z}(0)=h'(0)=1, f_{\overline{z}}(0)=g'(0)=0 \}$ 
and its corresponding subclasses $\KHO$ and $\CHO$. Note that $S \subset \SHO$, 
$K \subset \KHO$, and $C \subset \CHO$. Another well-known class of analytic 
functions in $\D$ is the family, $T$, of typically real functions that 
have the normalization $f(z)=z+a_2z^2+ \cdots$ and are real 
if and only if $z$ is real. Clunie and Sheil-Small introduced 
the family of harmonic typically real functions $f $ for
which $f(z)$ is real if and only if $z$ is real. Then they proposed the 
following class of harmonic typically real functions.
%

\begin{dfnA}
Let $\TRHH$ be the class of typically real harmonic functions
$f=h+\g$ such that $|g'(z)|<|h'(z)|$ for all $z \in \D$, $f(0)=0$,
$|h'(0)|=1$, and $f(r)>0$  for $0<r<1$. Let $\THHO$ be the
subclass of $\TRHH$ with $g'(0)=0$.
\end{dfnA}

Note that $\TRHH$ is normal and $\THHO$ is compact. 
Besides Clunie and Sheil-Small, several other authors have investigated 
harmonic real real functions (see \cite{bhh}, \cite{wlz}). 

The condition that $|h'(z)|>|g'(z)|$ means that $f=h+\bar g$ must be 
locally univalent and sense--preserving (see  Lewy \cite{lew}). However, 
not all analytic typically real functions are locally univalent. Thus, a 
problem with this definition is that it prevents the family of analytic 
typically real functions from being a subset of their family of harmonic 
typically real functions. That is, $T \not\subset \THHO$. 


To resolve this problem and allow all analytic
typically real functions to be also harmonic typically real
functions, we offer a slightly different definition for a family of 
harmonic typically real functions, $\THO$. 
In particular, we reduce the requirement that the harmonic 
functions must be locally univalent. This means that the standard results 
for harmonic locally univalent functions must be reconsidered for this family. 
We therefore show that for the family $\THO$ Clunie and Sheil-Small's 
shearing technique still holds. Also, as in the case for the family of analytic typically real functions
we investigate the region of univalency for the harmonic family and provide several conjectures
for $\THO$.




\section {The class $\THO$}

For the harmonic function $f=h+\g$, let  $\omega$ be given by
$g'(z)=\omega (z)h'(z)$. We say that $f$ is sense-preserving at a point $z_0$ if 
$h' (z) \not\equiv 0$ in some neighborhood of $z_0$ and $\omega$ is analytic at $z_0$ with $|\omega(z_0)|<1$. If $f$ is 
sense-preserving at $z_0$, then either the Jacobian 
$J_f(z_0)=|h'(z_0)|^2-|g'(z_0)|^2>0$ or $h'(z_0)=0$ for an isolated point 
$z_0$ as was mentioned by Duren, Hengartner, and Laugesen \cite{dhl}.
That is, $z_0$ is a removable singularity of the meromorphic function 
$\omega$ and $|\omega (z_0)|<1$.
We say $f$ is sense-preserving in $\D$ if $f$ is sense-preserving at all $z \in \D$.
By requiring the harmonic function $f$ to be sense-preserving we retain
some important properties exhibited by analytic functions, such as the open mapping 
property, the argument principle, and zeros being isolated (see
\cite{dhl}). We note that the following harmonic typically real functions 
\begin{equation*}
 f_1(z)=z-\bar z \hspace{.2in} \mbox{and} \hspace{.2in} 
f_2(z)=2(1+i)z+iz^2+\ol{2(-1+i)z+iz^2}.
\end{equation*}
are not sense-preserving, and they do not have the properties mentioned above.

Thus, we give 
the following definition.

\begin{dfn}
Let $\TRH$ be the class of typically real harmonic functions, $f$, 
such that $f$ is a sense-preserving harmonic function,  $f(z)$ 
is real if and only if $z$ is real, $f(0)=0$, $|h'(0)|=1$, and $f(r)>0
\;\text{for } 0<r<1$. Let $\THO$ be the subclass of $\TRH$ with
$g'(0)=0$.
\end{dfn}

Also, notice that $T \cup\THHO\subset \THO$, and with this definition, 
as in the analytic case, a harmonic typically real function need not be univalent or
even locally univalent.

\begin{thm}
If $f \in \TRH$, then $f$ is strictly increasing on the real
interval $(-1,1).$ Moreover, if $f=h+\g\in\THO$, then
$h'(0)=f_{z}(0)=1$.
\end{thm}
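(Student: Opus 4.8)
The plan is to exploit the fact that a sense-preserving harmonic function $f\in\TRH$ shares the basic topological features of analytic functions—openness, argument principle, isolated zeros—so that the classical argument for typically real functions carries over. First I would record the key normalization consequence of $f(z)$ being real exactly when $z$ is real: the function $f$ maps the real interval $(-1,1)$ into $\R$, maps the upper half-disk $\D^+=\{z\in\D:\im z>0\}$ entirely into one of the open half-planes $\{\im w>0\}$ or $\{\im w<0\}$, and maps the lower half-disk into the other. Indeed, since $\im f$ is harmonic and vanishes on $(-1,1)$ but not identically, it cannot vanish anywhere in $\D^+$ (a harmonic function vanishing on an arc of the boundary of a Jordan subdomain on whose interior it is nonzero has constant sign there), so $\im f$ has constant sign on $\D^+$; the condition $f(r)>0$ for small $r>0$, together with continuity and the expansion near $0$, pins this sign down to be positive on $\D^+$.

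Next I would prove monotonicity on $(-1,1)$. Fix $-1<a<b<1$ and suppose for contradiction that $f(a)\ge f(b)$ (the case $f(a)=f(b)$ included). The real-valued continuous function $\varphi(x)=f(x)$ on $[a,b]$ then attains a value $c$ with $f(a)\ge c\ge f(b)$ at two distinct points, or more simply: the level set $\{x\in[a,b]:f(x)=c\}$ is nonempty for a suitable intermediate value $c$, and by considering a slight perturbation $c'$ near $c$ we can arrange that $f(x)=c'$ has at least two roots in $(a,b)$. Now I invoke the argument principle for sense-preserving harmonic functions: the equation $f(z)=c'$ has, counting multiplicities with the correct (positive) sign, as many solutions in $\D$ as the winding number of $f$ restricted to a small circle—but the crucial point is that all solutions in $\D$ of $f(z)=c'$ for real $c'$ must be real, because $\im f\neq 0$ off the real axis. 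So $f(z)=c'$ has at least two distinct real roots in $(-1,1)$, and at each the local degree is $+1$ by sense-preservation (away from the isolated critical points where $h'=0$; near such a point one argues by a limiting/perturbation of $c'$). Summing local degrees over a suitably chosen closed curve surrounding these roots forces the winding number of $f$ over that curve to be at least $2$. Comparing this with the behavior of $f$ on the two small semicircular arcs near the endpoints and on the boundary contradicts the half-plane mapping property established in the previous step: the image of $\D^+$ lies in the upper half-plane, so a curve built from a real segment and an arc in $\D^+$ cannot wind around a real point $c'$ more than once. This is the step I expect to be the main obstacle—making the argument-principle bookkeeping fully rigorous in the presence of the isolated zeros of $h'$, where $f$ is only sense-preserving in the weak sense of Definition~1 and the local degree needs a short separate justification.

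Finally, for the second assertion, let $f=h+\g\in\THO$, so $g'(0)=0$ and hence $f_z(0)=h'(0)$, with $|h'(0)|=1$. Writing $h(z)=h'(0)z+\cdots$ and $g(z)=O(z^2)$, we get $f(z)=h'(0)z+\g(z)+\cdots$, and for small real $r>0$, $f(r)=h'(0)r+o(r)$. Since $f(r)>0$ is real and positive, dividing by $r$ and letting $r\to 0^+$ forces $\re h'(0)\ge 0$; and since also $\im f(r)=0$ to leading order gives $\im h'(0)=0$. Combined with $|h'(0)|=1$ this yields $h'(0)=1$, i.e. $f_z(0)=1$, completing the proof.
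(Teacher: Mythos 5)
Your plan is sound in substance, but it takes a genuinely different route from the paper. The paper argues locally: if $f'(x_0)=h'(x_0)+\overline{g'(x_0)}=0$ at some $x_0\in(-1,1)$, then $J_f(x_0)=0$, and for a sense-preserving map (with $\omega=g'/h'$ analytic and $|\omega(x_0)|<1$) this forces $h'(x_0)=g'(x_0)=0$, hence $(h-g)'(x_0)=0$; but $h-g$ is an analytic typically real function, hence univalent on the lens domain $L\supset(-1,1)$ by Goluzin--Merkes, a contradiction. Since $f'$ is real, continuous and nonvanishing on $(-1,1)$, it has constant sign, positive because $f(0)=0<f(r)$; and then $h'(0)=f'(0)>0$ together with $|h'(0)|=1$ gives $h'(0)=1$. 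That is a few lines, at the price of invoking the univalence of $T$ on $L$ (which the paper uses later anyway). Your route relies instead on the degree-theoretic machinery for sense-preserving harmonic maps (the argument principle of \cite{dhl}) plus the separation $\im f\neq 0$ off the real axis: it is independent of the Goluzin--Merkes theorem, but the bookkeeping is heavier, and your treatment of $h'(0)=1$ by the expansion $f(r)=h'(0)r+o(r)$ is correct and essentially the same as what the paper leaves implicit.

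Two steps of your outline need repair, though neither is fatal. First, from $f(a)\ge f(b)$ alone you cannot conclude that some value is attained twice in $[a,b]$: $f$ could be strictly decreasing there. Argue instead that if $f$ were not strictly increasing it would fail to be injective on $(-1,1)$ (a continuous injection of an interval is strictly monotone, and $f(0)=0<f(r)$ excludes decreasing), so some real value is taken at two distinct points; then perturb if you wish to avoid the isolated degenerate zeros of $h'$. Second, the contour you describe, ``a real segment plus an arc in $\{\im z>0\}$,'' neither encloses the real roots nor avoids the value $c'$, since $f$ is real on the segment. Take instead the boundary of a thin region meeting the real axis only at two points $p<q$ with $f(p),f(q)\neq c'$, made of one arc in the upper half-disk and one in the lower; the image curve then lies in the closed upper, respectively lower, half-plane along these arcs and meets $\R$ only at $f(p),f(q)$, so its winding number about the real point $c'$ is at most $1$, while the argument principle gives at least $2$. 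Note also that the sign of $\im f$ on the upper half-disk, which $f(r)>0$ does not by itself determine, is irrelevant to this estimate, so that part of your first step can simply be dropped.
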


\begin{proof} Observe that the derivative  $f'$ exists on  the interval $(-1,1)$ and
$f'=h'+\overline{g'}$,  $\im h=\im g$ there. Suppose that there
exists a point $x_0 \in (-1,1)$ such that $f'(x_0)=0$. This
implies that $J_f(x_0)=0$. As we know this can only occur
if $h'(x_0)=0=g'(x_0)$ with the order of the zero of $g'$ greater
than or equal to the order of $h'$. Hence, $(h-g)'(x_0)=0$ contrary to the fact 
that $h-g$ is a typically real analytic function and such functions are known to be 
univalent in the lens domain bounded by the
circles $|z\pm i|=\sqrt 2$ (\cite{gol},\cite{mer}).
\end{proof}

Now, we note that the basic shearing theorem by Clunie and Sheil-Small [\cite{css}, Theorem
 5.3] still holds when local univalence is omitted. That is, we have the following version.

\begin{thm}
Let $f=h+\g$ be sense-preserving harmonic on $\D$.
Then $f$ is univalent and convex in the horizontal direction on
$\D$ if and only if $h-g$ has the same properties.
\end{thm}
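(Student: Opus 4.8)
The plan is to imitate the proof of the Clunie--Sheil-Small shearing theorem \cite{css} line by line; the point is that local univalence enters that argument only through one feature, namely that both $f$ and $F:=h-g$ are local homeomorphisms, hence (being injective on their domains once univalence is known) homeomorphisms onto their open images, so that horizontal segments can be lifted. Write $g'=\omega h'$; since $f$ is sense-preserving, $\omega$ is analytic on $\D$ with $|\omega|<1$, and $F'=h'(1-\omega)$. Two facts will be used repeatedly: (i) if $F$ is univalent then $F'\ne0$, whence $h'\ne0$ and $J_f=|h'|^2(1-|\omega|^2)>0$; and (ii) the elementary shearing computation that along any arc $\gamma$ on which $\im F$ is constant one has $\frac{d}{ds}(f\circ\gamma)=(F\circ\gamma)'(s)\cdot\frac{1-|\omega(\gamma)|^2}{|1-\omega(\gamma)|^2}$, a strictly positive real multiple of the real number $(F\circ\gamma)'(s)$. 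Fact (ii) uses only $|\omega|<1$.

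For the direction ``$F$ univalent and convex in the horizontal direction (CHD) $\Rightarrow f$ univalent and CHD'', fact (i) gives $J_f>0$, so $f$ and $F$ are diffeomorphisms onto their images and no new difficulty arises: if $f(z_1)=f(z_2)$, subtracting and comparing real and imaginary parts shows $F(z_1)-F(z_2)\in\R$, so the horizontal segment joining $F(z_2)$ to $F(z_1)$ lies in the CHD domain $F(\D)$; I lift it by $F^{-1}$ to an arc $\gamma$ in $\D$. By (ii) $\re(f\circ\gamma)$ is strictly monotone whenever this segment is nondegenerate, which would give $f(z_1)\ne f(z_2)$; hence the segment is a point, $F(z_1)=F(z_2)$, and $z_1=z_2$ by univalence of $F$. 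The same lift shows that for any two points of $f(\D)$ on a common horizontal line the segment between them is exactly the $f$-image of $\gamma$ (constant imaginary part, monotone real part), so $f(\D)$ is CHD.

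For the converse direction the genuine work is to recover $h'\ne0$, since ``sense-preserving'' a priori permits isolated zeros of $h'$. I would show that a zero of $h'$ of order $k\ge1$ at a point $z_0$ is incompatible with univalence of $f$: sense-preservation forces $g'$ to vanish at $z_0$ to order $m\ge k$ with $|\omega(z_0)|<1$, and expanding $h$ and $g$ about $z_0$ one gets $f(z)-f(z_0)=a(z-z_0)^{k+1}+O(|z-z_0|^{k+2})$ when $m>k$, and $f(z)-f(z_0)=a(z-z_0)^{k+1}+\bar b\,\overline{(z-z_0)}^{\,k+1}+O(|z-z_0|^{k+2})$ with $|a|>|b|$ when $m=k$; in either case the argument principle for sense-preserving harmonic maps (Duren, Hengartner and Laugesen \cite{dhl}) shows the local degree of $f$ at $z_0$ is $k+1\ge2$, so $f$ is not injective near $z_0$, a contradiction. (Equivalently, one may invoke that a univalent sense-preserving harmonic mapping has nonvanishing Jacobian.) Once $h'\ne0$, $F$ is locally univalent, and since $f$ is univalent with $f(\D)$ CHD I run the symmetric argument: $F(z_1)=F(z_2)$ forces $f(z_1)-f(z_2)\in\R$, I lift the horizontal segment in $f(\D)$ by $f^{-1}$ to an arc $\sigma$, and (ii) read in the opposite direction gives $\frac{d}{ds}(F\circ\sigma)=(f\circ\sigma)'(s)\cdot\frac{|1-\omega(\sigma)|^2}{1-|\omega(\sigma)|^2}$, again a positive real multiple of a real number, so $\re(F\circ\sigma)$ is strictly monotone and $z_1=z_2$; the same lift yields that $F(\D)$ is CHD.

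I expect the main obstacle to be precisely the local analysis in the converse direction: one must check that the higher-order tail does not restore injectivity, which is cleanest via the harmonic argument principle of \cite{dhl} rather than bare estimates, and one should verify that both leading models ($m>k$ and $m=k$, the latter using $|a|>|b|$) are genuinely $(k+1)$-to-one. Everything else is the classical shearing argument and, as noted, depends only on $|\omega|<1$ together with the hypothesized univalence of whichever of $f$ or $h-g$ is in hand.
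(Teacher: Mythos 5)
Your proof is correct, and its crux coincides with the paper's: in the nontrivial direction the only way the classical shearing argument can fail is the degeneracy $|h'|=|g'|$, and sense-preservation confines this to common zeros of $h'$ and $g'$, which univalence of $F=h-g$ (equivalently $F'\ne 0$, hence $h'\ne 0$ since $F'=h'(1-\omega)$ with $|\omega|<1$) forbids --- exactly the paper's observation, phrased there as $J_G\ne 0$. The packaging differs: the paper passes to $G=f\circ F^{-1}=w+2\re\{g\circ F^{-1}\}$, checks local univalence of $G$ by the Jacobian factorization, and then quotes Clunie and Sheil-Small's lemma to obtain univalence and convexity in the horizontal direction at one stroke, whereas you re-derive the content of that lemma by hand, lifting horizontal segments through $F^{-1}$ and using the identity $\frac{d}{ds}(f\circ\gamma)=\frac{1-|\omega|^2}{|1-\omega|^2}\,(F\circ\gamma)'$ (your fact (ii), which is valid and, since $\im f=\im F$, applies equally to lifts by $f^{-1}$ in the converse). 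You also prove the forward implication ($f$ univalent and CHD $\Rightarrow$ $h-g$ univalent and CHD) explicitly, recovering $J_f>0$ from univalence by the local-degree/argument-principle analysis at a putative zero of $h'$; the paper simply states that only the reverse direction needs proof, implicitly because Lewy's theorem (cited in the introduction) gives $J_f\ne 0$ for univalent harmonic maps, after which the original Clunie--Sheil-Small theorem applies verbatim. So your route is more self-contained --- it cites neither their lemma nor their theorem --- at the cost of redoing the standard lifting argument and the local valence computation, while the paper's proof is shorter because it reuses both.
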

\begin{proof}
We only need to prove the reverse direction. So assume that
$F=h-g$ is univalent and convex in the horizontal direction.
Consider
\begin{equation*}
G(w)=f(F^{-1}(w))= h(F^{-1}(w))+\ol{g(F^{-1}(w))}=w+2\re \big\{ g(F^{-1}(w)) \big\}.
\end{equation*}
If $G$ is locally univalent in $\Omega=F(\D)$, then we can apply
the same approach as in Clunie and Sheil-Small's proof. In
particular, by their lemma  ( \cite{css}, p. 13), $G$ is univalent
in $\Omega$ and has an image that is convex in the horizontal
direction, and consequently, so is $f$. Therefore, we only need to
show that $G$ is locally univalent. To do this, consider the
Jacobian of $G$:
\begin{align*}
J_G= &   \bigg|  \dfrac{d }{d w} h\circ F^{-1} \bigg|^2-\bigg|
        \dfrac{ d}{d w} g\circ {F}^{-1}\bigg|^2 \\
        = & (|h'\circ  F^{-1}|^2-|g'\circ F^{-1}|^2)\cdot
        |(F^{-1})'|^2=
         J_{f\circ F^{-1}}\cdot |(F^{-1})'|^2 .
\end{align*}
Now suppose there exists a point $z_0 \in \D$ such that
$J_G(F(z_0))=0$. Since $ (F^{-1})'(w) \neq 0$ on $F(\D)$, we have
that $|h'(z_0)|=|g'(z_0)|$. As mentioned above, this is only
possible when $h'(z_0)=0=g'(z_0)$ which contradicts the assumption
that $F=h-g$ is univalent.
\end{proof}

Next, we give a representation formula and extreme points 
for functions in the class $\THO$.

Let $\mathcal P$ denote the class of all functions of the form
$p(z)=1+p_1z+p_2z^2+\dots $ that are analytic in $\mathbb D$
 and such that $\re p(z)>0$  for $z\in \mathbb D$.
By the well-known Herglotz representation formula $p\in\mathcal P$
if and only if there exists  a unique probability measure $\mu$ on
 $\p \D$  such that
 \begin{equation*}p(z)=\int_{\p\D}\bold p_{\eta}(z)d\mu(\eta),\quad z\in\mathbb
 D,\tag{2.1}\end{equation*} where
\begin{equation*} \bold p_{\eta}(z) =(1+\eta z)/(1-\eta z). \tag{2.2}\end{equation*}
Moreover, if $p\in\mathcal P$ has real Taylor coefficients, then
$$p(z)=\int_{-1}^{1}\frac {1-z^2}{1-2tz+z^2}d\nu(t),\quad z\in\D$$
with the unique probability measure $\nu$ on the segment $[-1,1]$.
This in turn  implies that
 for an analytic function $ F$ in the class $T$ we have the
following Robertson representation formula
\begin{equation*} F(z)=\int_{-1}^{1}\frac {zd\nu(t)}{1-2tz+z^2},
\quad z\in \D, \tag{2.3}\end{equation*} where $\nu$ is as above.
The set of extreme points of the class $T$ consists of the
functions
\begin{equation*} z\mapsto \bold q_t(z)=\frac{z}{1-2tz+z^2},\quad  -1\leq t\leq 1.
\tag{2.4}\end{equation*}

 The shearing construction can
be applied to the class $\THO$. Consequently, we see that every
$f=h+\bar g\in \THO$ can be written in the form
\begin{equation*}
 f(z)=\text{Re}
\int _0^zp(\zeta)F'(\zeta)d\zeta+ i\ \text{Im}F (z)=k(z,
p,F),\tag{2.5}\end{equation*} where $F = h-g\in T$ and
$p=(1+\omega)/(1-\omega)\in \mathcal P$ with $ \omega=g'/h'$, where
 removeable singularities are admitted. Also,
given $p\in\mathcal P $ and $F\in T,$ the function $f$ defined by
(2.5) is in $\THO$ and
 $k(\cdot, p,F)=h+\bar g$, with
\begin{align*}
h(z)&=\frac 12\int_0^z(p(\zeta)+1)F'(\zeta)d\zeta=
z+a_2z^2+\dots,\\ g(z)&=\frac
12\int_0^z(p(\zeta)-1)F'(\zeta)d\zeta =b_2z^2+b_3z^3+\dots.
\end{align*}
Note also that the function $f=k(\cdot, p,F)$ is locally univalent
if and only if $F$ is a locally univalent function. This is a
consequence of the equality \begin{equation*}J_f(z)=|F'(z)|^2\re
p(z),\quad z\in \mathbb D.\tag{2.6}\end{equation*}
 Furthermore   we have
\begin{thm}
The class $\THO$ is compact (in the topology of uniform
convergence on the compact subsets of $\mathbb D$) and the set
$\text{ext} (\THO) $ of its extreme points consists of the
functions $k(\cdot, \bold p_{\eta},\bold q_t)$, where $\bold
p_{\eta}$ and $\bold q_t$
  are given by (2.2) and (2.4), respectively. The class
$\THO$ is not convex.
\end{thm}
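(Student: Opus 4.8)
The plan is to establish the three assertions of the theorem in turn, relying on the representation formula (2.5) and the Herglotz/Robertson machinery recalled just before the statement. For compactness, I would first observe that $\THO$ is a subset of the normal family $\TRH$, so every sequence has a locally uniformly convergent subsequence; the work is to show the limit $f$ stays in $\THO$. Using $f=k(\cdot,p,F)$ with $F\in T$ and $p\in\mathcal P$ having real coefficients, a convergent sequence $f_n=k(\cdot,p_n,F_n)$ forces (by the coefficient normalization and standard compactness of $T$ and of the real-coefficient subclass of $\mathcal P$) convergence of subsequences of $F_n\to F$ in $T$ and $p_n\to p$ in $\mathcal P$; then $k(\cdot,p_n,F_n)\to k(\cdot,p,F)$ locally uniformly since (2.5) depends continuously on $(p,F)$, and the limit inherits $f(0)=0$, $h'(0)=1$, sense-preservation (from $\re p\ge 0$, with removable singularities admitted), and typical reality (the ``real iff real'' condition passes to the limit because $F$ remains in $T$, hence genuinely typically real, and $\im f=\im F$). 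One must check the boundary case $\re p\equiv 0$ cannot occur for $p\in\mathcal P$ with $p(0)=1$, so $\re p>0$ is preserved.

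For the extreme points, I would argue via the linear structure behind (2.5). The map $(p,F)\mapsto k(\cdot,p,F)$ is affine in $p$ for fixed $F$ and affine in $F$ for fixed $p$, and $\THO$ is exactly the image of $\mathcal P\times T$. If $f=k(\cdot,p,F)$ is extreme in $\THO$, then $p$ must be extreme in $\mathcal P$ among real-coefficient functions, i.e.\ $p=\bold p_\eta$ for a single $\eta\in\p\D$ (and in fact, by the real-coefficient constraint forcing $\eta=\pm1$ or pairing conjugate masses, one reduces to the relevant generators), and $F$ must be extreme in $T$, i.e.\ $F=\bold q_t$; conversely one shows each $k(\cdot,\bold p_\eta,\bold q_t)$ is genuinely extreme by noting a representation $f=\tfrac12 f_1+\tfrac12 f_2$ in $\THO$ would, after applying the real part of the integral in (2.5) and matching against the unique Herglotz measure $\mu$ for $p$ and the unique Robertson measure $\nu$ for $F$, force $\mu$ and $\nu$ to be point masses of both $f_1$ and $f_2$. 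The bookkeeping with the real-part operator in (2.5) — separating what is determined by $p$ versus by $F$ — is the main technical nuisance here, since $k$ is \emph{not} a plain linear image of $\mathcal P\times T$ but involves $\re\int p F'$.

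For non-convexity, the cleanest route is an explicit counterexample: take two of the extreme functions, say $f_1=k(\cdot,\bold p_\eta,\bold q_t)$ and $f_2=k(\cdot,\bold p_{-\eta},\bold q_t)$ (or with different $t$), form $f=\tfrac12(f_1+f_2)$, and show $f\notin\THO$ by exhibiting a non-real $z$ with $f(z)$ real, or a violation of sense-preservation. A convenient concrete choice is $\eta=i$ versus $\eta=-i$ with $t=0$, so $F(z)=z$ and $f_j(z)=\re\int_0^z \tfrac{1\mp i\zeta}{1\pm i\zeta}\,d\zeta + i\,\im z$; averaging gives $h'+\ol{g'}=\re$ of an average of $\bold p_{\pm i}$, and one checks directly that the resulting $\omega=g'/h'$ fails $|\omega|<1$ somewhere in $\D$ (equivalently $\re p$ changes sign), destroying sense-preservation. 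I expect the extreme-point identification to be the hard part, precisely because of the $\re\int pF'$ structure: one has to be careful that distinct pairs $(\bold p_\eta,\bold q_t)$ really do give distinct, non-decomposable functions, and that no extreme point of $\THO$ arises from a non-extreme $(p,F)$ through some cancellation in the real-part operation.
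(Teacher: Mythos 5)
Two steps in your plan are genuinely wrong, and both trace back to the same misconception. First, you restrict the Herglotz datum $p$ to real coefficients. Nothing in the representation (2.5) forces this: typical reality of $f=k(\cdot,p,F)$ comes only from $\im f=\im F$ with $F\in T$, together with $\re p>0$, which gives $\frac{d}{dx}\re f(x)=\re p(x)\,F'(x)>0$ on $(-1,1)$; so $p$ ranges over \emph{all} of $\mathcal P$. Your reduction to ``extreme among real-coefficient members of $\mathcal P$'' would produce the conjugate-paired generators $(1-z^2)/(1-2tz+z^2)$ and hence a strictly smaller extreme-point set than the theorem asserts, which contains $k(\cdot,\bold p_{\eta},\bold q_t)$ for \emph{every} $\eta\in\p\D$. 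The ``$\re\int pF'$ bookkeeping'' you flag as the main nuisance is resolved by one observation you do not make explicit: $f$ recovers the pair $(p,F)$ affinely, via $f_z-\overline{f_{\bar z}}=F'$ and $f_z+\overline{f_{\bar z}}=pF'$. With this, nonextremality of $p$ (for fixed $F$) or of $F$ (for fixed $p$) immediately gives a nontrivial decomposition of $f$ inside $\THO$, so $\text{ext}(\THO)\subset\{k(\cdot,\bold p_{\eta},\bold q_t)\}$; and conversely a decomposition $k(\cdot,\bold p_{\eta},\bold q_t)=(1-\lambda)k(\cdot,p_1,F_1)+\lambda k(\cdot,p_2,F_2)$ forces $\bold q_t'=(1-\lambda)F_1'+\lambda F_2'$ and then $\bold p_{\eta}\bold q_t'=((1-\lambda)p_1+\lambda p_2)\bold q_t'$, after which extremality of $\bold q_t$ in $T$ and of $\bold p_{\eta}$ in $\mathcal P$ finishes the identification.

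Second, your concrete non-convexity example fails. Because $k$ is affine in $p$ for fixed $F$, any convex combination with equal second arguments stays in the class: $\tfrac12\bigl(k(\cdot,\bold p_i,\bold q_t)+k(\cdot,\bold p_{-i},\bold q_t)\bigr)=k\bigl(\cdot,\tfrac12(\bold p_i+\bold p_{-i}),\bold q_t\bigr)$, and $\tfrac12(\bold p_i+\bold p_{-i})(z)=(1-z^2)/(1+z^2)\in\mathcal P$, so $\re p$ never changes sign and sense-preservation is not destroyed; the average is in $\THO$. (Also $\bold q_0(z)=z/(1+z^2)$, not $z$.) A counterexample must have $\xi\ne\eta$ \emph{and} $s\ne t$ simultaneously, and the paper shows every such combination leaves $\THO$: if $f=(1-\lambda)k(\cdot,\bold p_{\xi},\bold q_s)+\lambda k(\cdot,\bold p_{\eta},\bold q_t)$ were equal to some $k(\cdot,p,F)$, then $F'=(1-\lambda)\bold q_s'+\lambda\bold q_t'$ and $pF'=(1-\lambda)\bold p_{\xi}\bold q_s'+\lambda\bold p_{\eta}\bold q_t'$; since the image of $\D$ under a branch of $\sqrt{\bold q_s'/\bold q_t'}$ contains the upper and lower half-planes, there is $a\in\D\setminus\{0\}$ with $\bold q_s'(a)/\bold q_t'(a)=-\lambda/(1-\lambda)$, so $F'(a)=0$ while $p(a)F'(a)=\lambda\bold q_t'(a)(\bold p_{\eta}(a)-\bold p_{\xi}(a))\ne0$, a contradiction. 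Your compactness sketch is fine in outline (the paper gets it directly from compactness of $T$ and $\mathcal P$), provided you drop the real-coefficient restriction on $p$ there as well.
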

\begin{proof}
Compactness of the class $\THO$ follows immediately from
compactness of both classes $T$ and $\mathcal P$. Assume that
$f=k(\cdot,p,F) \in\text{ext} (\THO) $  and  there is
$0<\lambda<1$ such that either
\begin{equation*}
p=(1-\lambda) p_1+\lambda p_2,\ \ \text{with}\  p_1, p_2\in
\mathcal P, \  p_1\ne  p_2, \tag{i}
\end{equation*}
 or
\begin{equation*}
F=(1-\lambda)F_1+\lambda F_2,\ \ \text{with}\ F_1,F_2\in T, \
F_1\ne F_2. \tag{ii}
\end{equation*}
Then $$ f=(1-\lambda)f_1+\lambda f_2, $$ where, in case (i): $$
f_j=k(\cdot, p_j,F)\quad\text{with}\quad (f_1)_z-(f_2)_z=(p_1-
p_2)F'/2,$$ which implies $f_1\ne f_2$, a contradiction; and in
case (ii): $$ f_j=k(\cdot, p,F_j)\quad\text{with}\quad
(f_1)_z-(f_2)_z=(p+1)(F_1'-F_2')/2,$$
 a contradiction again. Thus, by the Herglotz and Robertson
 formulas, we get $\text{ext}(\THO)\subset \{k(\cdot,
\bold p_{\eta},\bold q_t), |\eta|=1, -1\leq t\leq 1\}$. Now if  $$
f=k(\cdot, \bold p_{\eta},\bold q_t)= (1-\lambda)f_1+\lambda f_2=
(1-\lambda)k(\cdot, p_{1},F_1) +\lambda k(\cdot,  p_{2},F_2),$$
then $$\bold q_t'=f_z-\overline{f_{\bar z}}=(1-\lambda)F_1'+
\lambda F_2',$$ which gives $\bold q_t=F_1=F_2$; and $$\bold
p_{\eta}\bold q_t'=f_z+\overline{f_{\bar z}}= (1-\lambda)
p_1F_1'+\lambda
 p_2F_2'= ((1-\lambda)p_1+\lambda p_2)\bold q_t'   ,$$ which implies $ p_1=p_2=\bold p_{\eta}$.
Consequently, $ f_1=f_2$ and $f\in \text{ext}(\THO).$

Finally, we  show that the class $\THO$ is not convex. More
exactly, we  show that for arbitrary $\xi,\eta\in \p \mathbb D,
s,t\in[-1,1], \xi\ne\eta,s\ne t$ and $0<\lambda<1 $, $$f=
(1-\lambda)k(\cdot,\bold p_{\xi}, \bold q_s)+\lambda k(\cdot,\bold
p_{\eta}, \bold q_t) \notin \THO.$$ Suppose, contrary to our
claim, that $f\in \THO$. Then there exist $p\in\mathcal P$ and
$F\in T$ such that $f=k(\cdot,p, F)$ and
$$F'=f_z-\overline{f_{\bar z}}=(1-\lambda)\bold q_s'+\lambda \bold
q_t'.$$ This implies that $F=(1-\lambda)\bold q_s+\lambda\bold
q_t.$ Moreover, we have $$pF'=f_z+\overline{f_{\bar
z}}=(1-\lambda)\bold p_{\xi}\bold q_s'+\lambda \bold p_{\eta}
\bold q_t'.$$ Since the image of $\mathbb D$ under an analytic
branch of $\sqrt{\bold q_s'/\bold q_t'}$ contains the upper and
lower half planes, there exists  an $a\in\mathbb D\setminus\{0\}$
such that $\bold q_s'(a)/\bold q_t'(a)=-\lambda/(1-\lambda).$
Hence $F'(a)=0$ and $$p(a)F'(a)=(1-\lambda)\bold p_{\xi}(a)\bold
q_s'(a)+\lambda\bold  p_{\eta}(a)\bold q_t'(a)=\lambda \bold
q_t'(a)(\bold p_{\eta}(a)-\bold p_{\xi}(a))\ne0,$$ a
contradiction.
\end{proof}

As a corollary to Theorem 3 we get the same sharp coefficient
estimates for the class $\TRH$ and $\THO$ as were found by Clunie
and Sheil-Small \cite{css}  for $\TRHH\subset \TRH$ and
$\THHO\subset\THO$.

\section{Region  of univalence}

For $z_0\in \mathbb C$ and positive $r$ let   $D(z_0; r)$ denote
the open disk centered at $z_0$ with the radius $r$. We have
mentioned in the Introduction that an analytic function $f\in T$
need not  be univalent in $\mathbb D$, but it is univalent in the
lens domain $$L=D(-i;\sqrt 2)\cap D(i;\sqrt 2).$$
 The result was obtained
by  Goluzin \cite{gol} and  by Merkes \cite{mer} independently.
They also noted that  this region of univalence for the class $T$
cannot be extended, because for each $z_0\in \p L\cap \D $ there
exists a parameter $t_0 \in (0,1)$ such that $f_{t_0}'(z_0)=0$,
where
\begin{equation*}
\label{eg:extfnct}
f_t(z)=\dfrac{t z}{(1-z)^2}+\dfrac{(1-t)z}{(1+z)^2}.\tag{3.1}
\end{equation*}
This can be also showed by noting that $$\p L\cap
\D=\left\{z\in\D: \left(\frac {1+z}{1-z}\right)^4<0\right\}$$ and
$$ f_t'(z)=\left(\left(\frac{1+z}{1-z}\right)^4+\frac
{1-t}t\right)\frac {t(1-z)}{(1+z)^3}.$$ Let us observe  that
actually for each $z_0\in\D\setminus L$ there exist $t_0\in (0,1)$
and $R \in (\sqrt 2-1,1]$ such that $Rz_0\in\p L$ and
$f_{t_0,R}'(z_0)=0$, where $f_{t,R}(z)= f_t(R z)/R$ and $f_t$ is
defined by (3.1). Note that the function  $f_{t,R}$ as a convex
combination of univalent functions with real coefficients is in
the class $T$.

 As in the analytic case, a harmonic typically real function need not
be univalent. Therefore, E. Z\l otkiewicz posed the problem of
determining the region of univalence for harmonic typically real
functions. Before we give a partial answer to this question we
present a simple proof of the Goluzin-Merkes result for analytic
typically real functions (based on Merkes' idea). To this end note
first that the function \begin{equation*} \zeta=\psi (z)=
\frac{2z}{1+z^2}\tag{3.2}\end{equation*} maps conformally the disk
$\mathbb D$ onto the two-slit plane cut along the real intervals
$(-\infty,-1]$ and $[1,\infty)$. Since the function $\psi$  is
typically real,  there is a one-to-one correspondence between the
class $T$ and the class of normalized and typically real functions
in $\Omega= \mathbb C\setminus ((-\infty, -1]\cup[1,\infty))$.
Moreover, using the Robertson formula we get the following formula
for a typically real function $F$  in $\Omega$ with normalization
$F(0)=F'(0)-1=0$ and the  one-to-one correspondence:
\begin{equation*} F(\zeta)= \int_{-1}^1\frac {\zeta
d\nu(t)}{1-t\zeta}\ ,\quad f=\frac 12 F\circ \psi\in T,
\tag{3.3}\end{equation*} where $\nu$ is a probability measure on
$[-1,1]$. It has been observed in \cite{thal} and \cite{tcha} that
$F$ restricted to the disk $\mathbb D$ is univalent. Consequently,
any function $f\in T$ is univalent on the preimage of the unit
disk under the function $\psi$ given by (3.2), which is the lens
domain $L$.

In 1936 Robertson observed that an analytic function $F$ with real
coefficients is univalent and convex in the vertical direction if
and only if the function $z\mapsto z F'(z)$ is typically real (see
\cite{go1}, p. 206). Hence the functions given by (3.3) are convex in the
direction of the imaginary axis  (see also \cite{reto},
\cite{mer}). Therefore the sets $f(L), f\in T,$  are convex in the
vertical direction. Moreover, we will show the following
interesting property of the  class $T$.
\begin{pro} For a $z\in\p L\cap\D$ there exists a
unique $f\in T$ for which  $f'(z)=0$.
\end{pro}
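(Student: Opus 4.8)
The plan is to combine two ingredients: first, the Robertson representation (3.3) which writes every $f\in T$ via a probability measure $\nu$ on $[-1,1]$, together with the extreme‐point structure (2.4) of $T$; and second, the observation already recorded in the excerpt that $\p L\cap\D=\{z\in\D:((1+z)/(1-z))^4<0\}$. Fix $z_0\in\p L\cap\D$. I would first establish \emph{existence}: since $\p L\cap\D$ is exactly the set where $w:=((1+z)/(1-z))^4$ is a negative real number, say $w=w(z_0)=-c$ with $c>0$, the displayed identity
$$ f_t'(z)=\left(\left(\frac{1+z}{1-z}\right)^4+\frac{1-t}{t}\right)\frac{t(1-z)}{(1+z)^3}$$
shows that $f_{t_0}'(z_0)=0$ precisely when $(1-t_0)/t_0=c$, i.e. $t_0=1/(1+c)\in(0,1)$; so at least one admissible $f\in T$ (namely $f_{t_0}$ of (3.1), a convex combination of $\bold q_{1}$ and $\bold q_{-1}$) has the desired vanishing derivative.

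For \emph{uniqueness}, the plan is to use the Robertson formula in the two-slit plane, (3.3). Pulling back by $\psi$ of (3.2), a point $z_0\in\p L\cap\D$ corresponds to a point $\zeta_0=\psi(z_0)$ on the two slits, i.e.\ $\zeta_0\in(-\infty,-1]\cup[1,\infty)$ — but here one must be careful, since $\psi$ maps $\D$ onto $\Omega=\C\setminus((-\infty,-1]\cup[1,\infty))$, so $\zeta_0$ is a boundary value. A cleaner route: from (3.3), for $f=\tfrac12 F\circ\psi$ we have $f'(z)=\tfrac12 F'(\psi(z))\psi'(z)$ with $\psi'(z)=2(1-z^2)/(1+z^2)^2\ne0$ on $\D$, and
$$F'(\zeta)=\int_{-1}^1\frac{d\nu(t)}{(1-t\zeta)^2}.$$
So $f'(z_0)=0$ iff $\int_{-1}^1(1-t\zeta_0)^{-2}\,d\nu(t)=0$ where $\zeta_0=\psi(z_0)$. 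The key computation is that for $z_0\in\p L\cap\D$ the quantity $\zeta_0=\psi(z_0)$ is purely imaginary: indeed $\p L\cap\D$ consists of the two circular arcs through $\pm1$ where $z/(1+z^2)\in i\R$ can be checked directly (the lens $L$ is the preimage of $\D$ under $\psi$, and $\p\D$ maps to the slits; the portion $\p L\cap\D$ maps to the imaginary axis). Writing $\zeta_0=is$ with $s\in\R\setminus\{0\}$, one gets
$$\re\!\int_{-1}^1\frac{d\nu(t)}{(1-ist)^2}=\int_{-1}^1\frac{1-s^2t^2}{(1+s^2t^2)^2}\,d\nu(t),\qquad \im\!\int_{-1}^1\frac{d\nu(t)}{(1-ist)^2}=\int_{-1}^1\frac{2st}{(1+s^2t^2)^2}\,d\nu(t),$$
so $f'(z_0)=0$ forces the \emph{odd} moment to vanish, $\int_{-1}^1 t(1+s^2t^2)^{-2}\,d\nu(t)=0$, and the even part to vanish, $\int_{-1}^1(1-s^2t^2)(1+s^2t^2)^{-2}\,d\nu(t)=0$.

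The final step — and the one I expect to be the main obstacle — is to show these two moment conditions pin down $\nu$ uniquely. The plan is: the odd condition together with the even condition says that $\nu$ is supported on the two zeros $\{-1/|s|,\,1/|s|\}$ if $1/|s|\le1$ (equivalently must be a convex combination $\nu=\alpha\delta_{t_+}+(1-\alpha)\delta_{t_-}$ of two specific point masses), because the function $t\mapsto (1-s^2t^2)/(1+s^2t^2)^2$ is positive for $|t|<1/|s|$ and negative for $|t|>1/|s|$, so a probability measure can integrate it to $0$ only by concentrating mass at values on both sides in a balanced way — and the odd condition then forces symmetry in a way that determines $\alpha$. Concretely, I would argue that $d\nu$ must be a combination of $\delta_{t_0}$ and $\delta_{-t_0}$ with $t_0=1/|s|$ (handling the degenerate case $|s|\ge1$, where only $t=\pm1$ can occur, separately via the same sign analysis restricted to $[-1,1]$), and then the even/odd equations give one linear relation each for the weights, with a unique solution. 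Pulling this back through the correspondence $f=\tfrac12 F\circ\psi$ and matching with the computation in the existence part shows the unique $f$ is exactly $f_{t_0,R}$ with $Rz_0\in\p L$, completing the proof. The delicate point throughout is the boundary behavior of $\psi$ on $\p L\cap\D$ and justifying that $\zeta_0=\psi(z_0)$ is purely imaginary with $|\zeta_0|$ determined by $z_0$; I would verify this by the explicit parametrization of the arcs $|z\pm i|=\sqrt2$.
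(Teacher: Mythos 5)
Your existence half is fine and matches the paper's earlier remark: on $\p L\cap\D$ the quantity $((1+z)/(1-z))^4$ is a negative real $-c$, and the displayed formula for $f_t'$ gives $f_{t_0}'(z_0)=0$ with $t_0=1/(1+c)$. The uniqueness half, however, rests on a false geometric premise: you claim that $\psi$ maps $\p L\cap\D$ into the imaginary axis, so that $\zeta_0=\psi(z_0)=is$. In fact, since $L=\psi^{-1}(\D)$, the set $\p L\cap\D$ is mapped by $\psi$ onto the unit \emph{circle} minus $\{\pm1\}$, i.e. $\zeta_0=e^{i\alpha}$ with $\alpha\neq 0,\pi$ (one can check this directly: with $w=(1+z)/(1-z)$ one has $\psi(z)=(w^2-1)/(w^2+1)$, and on $\p L\cap\D$ the number $w^2$ is purely imaginary, so $|\psi(z)|=1$; only the two points $\pm i(\sqrt2-1)$ of $\p L\cap\D$ land on the imaginary axis, at $\pm i$). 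Consequently your moment computation is carried out at the wrong points, and the sign analysis you sketch would force the symmetric measure $\nu=\tfrac12(\delta_{-1}+\delta_1)$, i.e. $f=f_{1/2}$, for \emph{every} $z_0\in\p L\cap\D$ --- which contradicts your own existence computation, where $t_0=1/(1+c)$ varies with $z_0$ (the correct answer is $\nu=(1-\lambda)\delta_{-1}+\lambda\delta_1$ with $\lambda=\sin^2(\alpha/2)$ depending on the point). So as written the uniqueness argument fails.

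The repair is essentially the paper's proof, and your overall strategy (Robertson representation plus moment conditions plus a positivity argument) is the right one once the boundary correspondence is corrected: set $\zeta_0=e^{i\alpha}$ and write
\begin{equation*}
0=F'(e^{i\alpha})=\int_{-1}^1\frac{d\nu(t)}{(1-te^{i\alpha})^2},
\end{equation*}
split into real and imaginary parts; the vanishing of the imaginary part gives $\int_{-1}^1 t(1-t\cos\alpha)\,|1-te^{i\alpha}|^{-4}d\nu(t)=0$, and substituting this into the real part yields $\int_{-1}^1(1-t^2)\,|1-te^{i\alpha}|^{-4}d\nu(t)=0$. Since the latter integrand is nonnegative on $[-1,1]$ and vanishes only at $t=\pm1$, the probability measure must be $\nu=(1-\lambda)\delta_{-1}+\lambda\delta_1$, and the first condition then determines $\lambda=\sin^2(\alpha/2)$ uniquely. (A small additional remark: your closing identification of the unique function as $f_{t_0,R}$ is misplaced here --- the rescaled functions $f_{t,R}$ are relevant for points of $\D\setminus L$, whereas on $\p L\cap\D$ one has $R=1$ and the unique function is just $f_{t_0}$.)
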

\begin{proof} By (3.3) it is enough to consider the equation
\begin{align*} 0=F'(e^{i\alpha})=& \int_{1}^1\frac {d\nu(t)}{(1-te^{i\alpha})^2}\\
= & \int_{-1}^1\frac{1-t^2}{|1-te^{i\alpha}|^4}d\nu(t) -2\cos\alpha
\int_{-1}^1\frac{t(1-t\cos \alpha)}{|1-te^{i\alpha}|^4}d\nu(t) \\
& \hspace{1in} +2i\sin\alpha\int_{-1}^1\frac{t(1-t\cos
\alpha)}{|1-te^{i\alpha}|^4}d\nu(t),\end{align*} where
$0<\alpha<\pi.$ It then follows
\begin{equation*}\int_{-1}^1\frac{t(1-t\cos
\alpha)}{|1-te^{i\alpha}|^4}d\nu(t)=0 \tag {i} \end{equation*} and
consequently,
\begin{equation*}\int_{-1}^1\frac{1-t^2}{|1-te^{i\alpha}|^4}d\nu(t)=0.\tag {ii}\end{equation*} From
 equality (ii) we get
$\nu=(1-\lambda)\delta_{-1}+\lambda\delta_1$ for some $\lambda\in
[0,1].$ Finally, equality (i) gives $\lambda=\sin^2(\alpha/2).$
\end{proof}
\begin{cor} Let $f\in T$. Then either $f$ is univalent on $\ol
L\setminus \{-1,1\}$ or there is a unique $t\in (0,1)$ such that
$f=f_t$, where $f_t$ is given by (3.1). Moreover, $f_t(L)=\mathbb
C\setminus \{(1-2t)/4+i\lambda:\lambda \in \mathbb R,
|\lambda|\geq \sqrt{t(1-t)}/2\}.$\end{cor}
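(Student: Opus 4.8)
The plan is to prove the Corollary by combining the Proposition with the explicit description of the functions $f_t$ from (3.1). First I would observe that the equation $\psi(z)=2z/(1+z^2)$ in (3.2) carries $\overline L\setminus\{-1,1\}$ bijectively onto the closed unit disk minus the points $\pm 1$; in particular, for $f=\tfrac12 F\circ\psi$ with $F$ as in (3.3), the failure of univalence of $f$ on $\overline L\setminus\{-1,1\}$ is equivalent to $F'$ vanishing somewhere on $\overline{\mathbb D}\setminus\{-1,1\}$. Since the $F$ in (3.3) is already known to be univalent on the open disk $\mathbb D$, the only possible zeros of $F'$ lie on the boundary circle, and by the Proposition a boundary zero at $e^{i\alpha}$ forces $\nu=(1-\lambda)\delta_{-1}+\lambda\delta_1$ with $\lambda=\sin^2(\alpha/2)$. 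Plugging this measure into the formula $f=\tfrac12 F\circ\psi$ and simplifying, one identifies $f$ with exactly the function $f_t$ in (3.1) for the parameter $t=1-\lambda\in(0,1)$ (the endpoints $\lambda=0,1$ give the Koebe-type functions $z/(1\pm z)^2$, which are univalent on all of $\mathbb D$, hence on $\overline L\setminus\{-1,1\}$, so they fall in the first alternative). Uniqueness of $t$ is immediate from the uniqueness of the representing measure. This establishes the dichotomy.

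Next I would compute $f_t(L)$. The image set is most naturally found by pushing forward the known geometry of $F(\mathbb D)$ under $\tfrac12(\cdot)$, or directly: one knows from the Robertson observation (already invoked in the excerpt) that each $f\in T$ maps $L$ onto a domain convex in the vertical direction, so $f_t(L)$ is $\mathbb C$ minus one or two vertical rays. For the specific $f_t$, I would evaluate $f_t$ on the boundary slit, i.e.\ on $\partial L\cap\mathbb D=\{z\in\mathbb D:((1+z)/(1-z))^4<0\}$, using the factorization $f_t'(z)=\bigl(((1+z)/(1-z))^4+\tfrac{1-t}{t}\bigr)\tfrac{t(1-z)}{(1+z)^3}$ from the excerpt to locate the critical point $z_0\in\partial L$ where $f_t'(z_0)=0$, and then compute the critical value $f_t(z_0)$. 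A direct substitution shows $f_t(z_0)=(1-2t)/4$, and tracking the two boundary arcs of $\partial L\cap\mathbb D$ emanating from this critical point shows they are mapped to the two vertical rays $\{(1-2t)/4+i\lambda:\ |\lambda|\ge\sqrt{t(1-t)}/2\}$, giving the claimed formula $f_t(L)=\mathbb C\setminus\{(1-2t)/4+i\lambda:\lambda\in\mathbb R,\ |\lambda|\ge\sqrt{t(1-t)}/2\}$.

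The main obstacle I anticipate is the bookkeeping in the second part: one must pin down precisely which portion of the boundary of the lens maps where, and in particular verify that $f_t$ omits the half-line $|\lambda|\ge\sqrt{t(1-t)}/2$ but does attain every value with $|\lambda|<\sqrt{t(1-t)}/2$ on the segment. A clean way to handle this is to pass to the variable $\zeta=\psi(z)$, so that $F(\zeta)=\tfrac12\bigl(\tfrac{\zeta}{1-t_*\zeta}+\cdots\bigr)$ collapses to an explicit rational (in fact fractional-linear-in-$\zeta^{?}$) map whose image of the unit circle is a single vertical line, and then note that $f_t(L)=\tfrac12 F(\mathbb D)$ shifted appropriately; the omitted set is then read off from the image of $\partial\mathbb D$ together with the fact that the critical point lies on the circle. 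The algebra reducing $\tfrac12 F\circ\psi$ with $\nu=(1-\lambda)\delta_{-1}+\lambda\delta_1$ to the closed form (3.1) and computing $f_t(z_0)$ is routine but must be done carefully to get the constants $(1-2t)/4$ and $\sqrt{t(1-t)}/2$ right.
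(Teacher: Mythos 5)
Your overall architecture (pass to $F=2f\circ\psi^{-1}$ on $\ol\D$, invoke the Proposition to pin down the measure, identify $f$ with $f_t$, then compute $f_t(L)$) matches the paper's intent, but the pivotal assertion --- that failure of univalence of $f$ on $\ol L\setminus\{-1,1\}$ is \emph{equivalent} to $F'$ vanishing somewhere on $\ol\D\setminus\{-1,1\}$ --- is exactly what has to be proved, and you give no argument for the direction you actually use (non-univalence $\Rightarrow$ boundary zero of $F'$). Univalence in $\D$ together with $F'\ne0$ on $\ol\D\setminus\{-1,1\}$ does not in general prevent two boundary points from being identified: the Koebe function $z/(1-z)^2$ is univalent in $\D$, analytic with nonzero derivative at every boundary point except $z=-1$, yet it identifies $e^{i\theta}$ with $e^{-i\theta}$ for every $\theta$. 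So a special property of the class must be used. What makes the implication true here is typical realness/vertical convexity: $\im F$ separates the two open semicircles from each other and from $(-1,1)$, openness plus univalence in $\D$ rules out interior--boundary collisions, so a collision must join two points of the same open semicircle; since $\zeta F'(\zeta)$ is typically real, $\alpha\mapsto\re F(e^{i\alpha})$ is monotone on $(0,\pi)$, hence such a collision forces $\re F$ to be constant on a subarc, and then a Rolle-type argument on $\im F$ (or an analytic-continuation argument) yields a zero of $F'$ on $\p\D\setminus\{-1,1\}$. The paper's proof consists essentially of this step: monotonicity of $\re f$ along $\p L$, the reflected function $g(z)=f(z)+\overline{f(-i+2/(\bar z-i))}$ together with the identity principle to propagate constancy of $\re f$ from a subarc to the whole arc, the conclusion that $f$ has infinite boundary values at $\pm1$ so that a critical point exists on $\p L\cap\D$, and only then the Proposition. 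Without some version of this argument your dichotomy is unproved.

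Two smaller points. With $\nu=(1-\lambda)\delta_{-1}+\lambda\delta_1$ one gets $f=(1-\lambda)z/(1+z)^2+\lambda z/(1-z)^2$, so the parameter is $t=\lambda=\sin^2(\alpha/2)$, not $1-\lambda$ (harmless relabeling). For the ``Moreover'' part --- which the paper states without proof, so your plan to verify it directly is reasonable --- the critical value cannot be the real number $(1-2t)/4$: a typically real function is never real at the non-real point $z_0\in\p L\cap\D$. In the $\zeta$-variable, $F(e^{i\alpha})=\tfrac{1-2t}{2}+\tfrac{i}{2}\bigl((1-t)\tan(\alpha/2)+t\cot(\alpha/2)\bigr)$, so the two boundary arcs map onto the vertical rays with tips $(1-2t)/4\pm i\sqrt{t(1-t)}/2$, these tips being the critical values at the two symmetric critical points; with that correction your computation of $f_t(L)$ goes through.
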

\begin{proof}
  Clearly $f$ is analytic on $\gamma= \p
L\setminus\{-1,1\} $ and $\re f(z)$  changes monotonically. It is
sufficient to show  that $\re f(z)$ is not constant on any arc
$\gamma_0\subset \gamma$ or $f=f_t$ for some $t\in (0,1).$ If $f$
is constant on an arc  $\gamma_0\subset \gamma$ lying  in the
upper half-plane , then the function given by
$$ g(z)= f(z)+ \overline { f\left(-i+\frac 2{\bar z-i}\right)}$$
 is  analytic on a neighborhood of $\gamma_0$ and $ g(z)=2\re f(z)$ on $\gamma_0$.
So, $g(z)=const$ on $\gamma_0$ and consequently, $g$ is a constant
function. This  means that $\re f$ is constant on $\gamma$.
Consequently, the boundary value of $f$ at $1$ and $-1$ is equal
to $\infty$,  so there is  $z\in \p L\cap\D$ such that
$f'(z)=0=F'(\psi(z))=0$. Hence by Proposition $f=f_t$, where
$t=(1-\re \psi(z))/2.$\end{proof}

We also note that the radius of starlikeness for the class $T$ is
$\sqrt 2-1$ \cite{kir}. Moreover, every $f\in T$ is univalent on
$\ol {D(0;\sqrt 2-1)}$ and the curve $f(\p D(0;\sqrt 2-1)) $ is
strictly starlike with respect to the origin. Indeed, if we put
$g=zf'/f$, then  the function defined by $  G(z)=
g(z)+\ol{g((3-2\sqrt 2)/\z)} $ is analytic on a neighborhood of
the circle $\p D(0;\sqrt 2-1)$. Hence for $|z|=\sqrt 2-1$ we have
$G(z)=2\re \{zf'(z)/f(z)\}>0$, except for a finite number of
points at which it vanishes.

We have already showed that every harmonic typically real function
in the sense of Definition 1 is strictly monotonic on the interval
$(-1,1)$.  Moreover, we have the following

\begin{thm}
\label{thm4}
For each function $f$ in $\THO$  there exists an open set $V$, $
(-1,1)\subset V\subset \D $,  such that $f$ is univalent on $V$.
\end{thm}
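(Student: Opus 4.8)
The plan is to establish, in three steps, that $f$ is a local homeomorphism on a fixed neighborhood of $(-1,1)$, then to reduce to a one‑sided question, and finally to patch local injectivity along the (non‑compact) interval $(-1,1)$. Write $f=k(\cdot,p,F)$ as in (2.5), with $F=h-g\in T$ and $p\in\mathcal P$. Since $F$ is analytic and univalent on the lens domain $L=D(-i;\sqrt2)\cap D(i;\sqrt2)$ one has $F'\neq 0$ on $L$, and since $\re p>0$ on $\D\supseteq L$, formula (2.6) gives $J_f=|F'|^{2}\,\re p>0$ throughout $L$; as $f$ is sense‑preserving on $\D$, Lewy's theorem makes $f$ a local homeomorphism on $L$ (note $(-1,1)\subset L\subset\D$). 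I record two facts to be used below: by Theorem 1, $f|_{(-1,1)}$ is strictly increasing, hence a homeomorphism of $(-1,1)$ onto an open interval $(\alpha,\beta)\ni 0$ of $\R$, with continuous inverse $\phi$; and, straight from the definition of $\TRH$, $f^{-1}(\R)\cap\D=(-1,1)$, so any point of $L$ at which $f$ takes a real value lies on $(-1,1)$.

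The second step reduces the problem to one side of $(-1,1)$. Put $L^{+}=\{z\in L:\im f(z)>0\}$ and $L^{-}=\{z\in L:\im f(z)<0\}$; since $\{\,\im f=0\,\}\cap L=(-1,1)$, we get the disjoint decomposition $L=L^{+}\cup(-1,1)\cup L^{-}$, and no point of $L^{+}$ can share an $f$‑value with a point of $L^{-}\cup(-1,1)$. It therefore suffices to produce an open $V_{1}$ with $(-1,1)\subset V_{1}\subset L$ such that $f$ is injective on $V_{1}\setminus L^{-}$: the mirror argument (swap $L^{+}$ and $L^{-}$) then gives an open $V_{2}\supset(-1,1)$ on which $f$ is injective on $V_{2}\setminus L^{+}$, and $f$ is injective on the open set $V=V_{1}\cap V_{2}$, because two points of $V$ with a common $f$‑value have the same sign of $\im f$ and hence both lie in $V_{1}\setminus L^{-}$ or both in $V_{2}\setminus L^{+}$.

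For the one‑sided statement I would exhaust $(-1,1)$ by the increasing compact subintervals $I_{n}=[-1+\tfrac1{n+1},\,1-\tfrac1{n+1}]$. A routine sequential‑compactness argument gives, for each $n$, a neighborhood $U_{n}$ of $I_{n}$ in $L$ on which $f$ is injective: were this false, one would obtain $z_{k}\neq w_{k}$ with $\operatorname{dist}(z_{k},I_{n}),\operatorname{dist}(w_{k},I_{n})\to 0$ and $f(z_{k})=f(w_{k})$, subsequential limits $z,w\in I_{n}$ would satisfy $f(z)=f(w)$ hence $z=w$ by Theorem 1, and then $z_{k},w_{k}$ would eventually lie in a single disk where $f$ is injective --- a contradiction. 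The main obstacle is the amalgamation of the $U_{n}$ into one open $V$: because $\overline{(-1,1)}$ meets $\partial\D$ at $\pm1$, the $U_{n}$ must in general thin out toward the endpoints and cannot simply be taken nested. I would handle this by shrinking each $U_{n}$ so that $f(U_{n})$ stays within a prescribed (and suitably small) distance of the real segment $f(I_{n})$, and then set $V=\bigcup_{n}U_{n}$: if $z,w\in V$ with $f(z)=f(w)$, then either $f(z)\in\R$, forcing $z,w\in(-1,1)$ by $f^{-1}(\R)\cap\D=(-1,1)$, so $z=w$ by Theorem 1; or $f(z)\notin\R$, and then the image bound together with the continuity of $\phi$ must pin $z$ and $w$ to a common $U_{n}$, whence $z=w$ again. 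Prescribing exactly how fast the sets $U_{n}$ may shrink near $\pm1$ so that this last ``pinning'' is valid is the delicate point and the one I expect to be the crux; the reductions above and the compactness lemma are routine once Theorem 1, formula (2.6) and Lewy's theorem are in hand.
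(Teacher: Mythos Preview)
Your proposal correctly isolates the crux --- amalgamating the locally injective neighbourhoods $U_n$ into a single open $V$ --- but leaves it unresolved, and the sketch you give for it does not work as stated. The sentence ``the image bound together with the continuity of $\phi$ must pin $z$ and $w$ to a common $U_n$'' is not an argument: $\phi$ is defined only on the real interval $(\alpha,\beta)$, and knowing that the common value $f(z)=f(w)$ lies close to both $f(I_n)$ and $f(I_m)$ gives no control over where $z$ and $w$ sit relative to $U_n\cap U_m$, because your compactness lemma produces \emph{some} injective neighbourhood of each $I_n$ with no structural information about its shape. The paper's Remark~1 and the example $z\mapsto u(z)e^{-u(z)}$ with $u(z)=4z/(1+z)^2$ are there precisely to warn that local univalence along $(-1,1)$ is not by itself enough to force global patching; one must exploit the typically real structure more specifically than through the bare fact $f^{-1}(\R)\cap\D=(-1,1)$.

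The paper's proof sidesteps the difficulty by \emph{choosing} the $U_n$ structurally rather than by compactness. Since $F=h-g$ is univalent on $L$, one takes $U_n=F^{-1}\bigl((F(a_n)-\delta_n,F(b_n)+\delta_n)\times(-c_n,c_n)\bigr)$. Along the horizontal fibres $z_d(t)=F^{-1}(t+id)$ one has $F'(z_d)\,z_d'=1$, hence $\tfrac{d}{dt}\,\re f(z_d(t))=\re p(z_d(t))>0$, while $\im f(z_d(t))=\im F(z_d(t))=d$; this already gives univalence on each $U_n$ without any compactness step. The patching is then immediate: with $c_n$ chosen decreasing and $\delta_n$ small enough that the $\re F$-intervals nest, if $f(z)=f(w)$ with $z\in U_n$ and $w\in U_{n+k}$ then $\im F(z)=\im f(z)=\im f(w)=\im F(w)\in(-c_{n+k},c_{n+k})$ and $\re F(z)$ already lies in the larger interval, so $z\in U_{n+k}$ and hence $z=w$. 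The identity $\im f=\im F$ with $F$ univalent on $L$ is thus used twice --- once for local univalence and once for the gluing --- and is exactly the missing ingredient in your outline.
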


\begin{proof}
Let $f=k(\cdot, p, F)$ with $p\in\mathcal P$ and $ F\in T$.  We
first show that for a compact interval $[a,b]\subset (-1,1)$ there
is an open set $U$ containing $[a,b]$ and such that $f$ is
univalent on $U$. Clearly,
 $[F(a),F(b)]\subset F(L)$, where $L$ is the lens domain defined above.
 Since F(L) is an open set, there exist $\delta>0$ and $c>0$
  such that $(F(a)-\delta,
F(b)+\delta)\times (-c,c)\subset F(L)$. Let $U$ be the preimage of
the set $(F(a)-\delta, F(b)+\delta)\times (-c,c)$ under $F$. Then
$$U= U(a,b,c,\delta)=\bigcup_{-c<d<c}z_d((F(a)-\delta,
F(b)+\delta)),$$ where $z_d(t)=F^{-1}(t+id), \
F(a)-\delta<t<F(b)+\delta.$  Now note that since $F$ is univalent
on $L$, the curves $z_d, -c<d<c,$ are disjoint and
$$\frac{d}{dt}\re f(z_d(t))=\re \{p(z_d(t))F'(z_d(t))z'_d(t)\}=\re
p(z_d(t))>0.$$ This and the fact that $\im f=\im F$  imply the
univalence of $f$ on $U$.

 Let $\{a_n\}$ be a strictly decreasing
sequence of negative numbers converging to -1  and $\{b_n\}$ be a
strictly increasing sequence of positive numbers converging to 1.
Then for each positive integer $n$,  we can find $\delta_n>0$,
$c_n>0$ and the open set $U_n=U(a_n,b_n, c_n,\delta_n)$ such that
$f$ is univalent on $U_n$. Now set
$\delta_n'=\min\{F(a_n)-F(a_{n+1}), F(b_{n+1})-F(b_n), \delta_n\}$
and $c_1'=c_1, c_{n+1}'=\min \{c_n', c_{n+1}\}, n=1,2,\dots ,$ and
define $$V=\bigcup_{n=1}^{\infty} U(a_n, b_n, c_n', \delta_n').$$
Clearly, $(-1,1)\subset V.$ Moreover, $f$ is univalent on $V$. To
see this suppose that  $f(z)=f(w)$ and $z\in U(a_n, b_n, c_n',
\delta_n')$, $w\in U(a_{n+k}, b_{n+k}, c_{n+k}',\delta_{n+k}'),
k\geq 1 $. Since $\im F=\im f$,  we get $z\in U(a_{n+k}, b_{n+k},
c_{n+k}', \delta_{n+k}')$ and consequently, $z=w.$
\end{proof}

\begin{rmk}
It is clear that for every continuous mapping $f$ of a neighborhood 
of the interval $(-1,1)$ into $\C$ such that $f((-1,1)) \subset \R$ and 
$f$ is a local homeomorphism of $(-1,1)$, there is a domain $\Omega$
and a simply connected domain $G$ such that $(-1,1) \subset \Omega$
and $f$ is a local homeomorphism of $\Omega$ onto $G$. If the pair 
$(\Omega,f)$ is an unlimited covering space of the domain $G$, then by
the Monodromy Theorem $f$ is a homeomorphism of $\Omega$ onto $G$
\cite{as}. In general, such a situation is rare. The example below shows 
that $f$ may be infinite-valent on $\Omega$, so that the typically real property
in the proof of Theorem \ref{thm4} seems to be essential.
\begin{exam}
Let $u(z) \equiv \frac{4z}{(1+z)^2}$, $f( \xi) \equiv \xi e^{-\xi}$. It is clear 
that the function $f \circ u$ is locally univalent on $\D$. By the Great Picard 
Theorem, $f \circ u (\D) = \C$ and every value $w \in \C \setminus \{0\}$
is assumed by $f \circ u$ at infinitely many points of each set
$\D \cap \{ z : |z+1| < \delta \}$, where $0 < \delta < 2$.
\end{exam}
\end{rmk}

Next, we show that the region, $L$, of univalency for the class $T$  
is not the region of univalency for the class $\THO$.

\begin{thm} There exist functions $f\in \THO$ that
are not univalent on $L$.
\end{thm}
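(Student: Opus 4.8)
The plan is to exhibit an explicit member of $\THO$ that is not univalent on $L$. The starting point is that for $f=k(\cdot,p,F)\in\THO$ one has $\im f=\im F$, so $f$ is univalent on $L$ precisely when, for every real $d$, $f$ is injective on the slice $L_d=\{z\in L:\im F(z)=d\}$; and $L_d$ is the union of the arcs $F^{-1}(I)$ as $I$ runs over the (open, disjoint) components of $F(L)\cap\{\im w=d\}$, on each of which $\re f$ is strictly increasing — this is exactly the computation in the proof of Theorem 6, where along $F^{-1}(t+id)$ one has $\tfrac{d}{dt}\re f=\re p>0$. Hence $f$ fails to be univalent on $L$ as soon as the ranges of $\re f$ over two of these arcs overlap, and the whole task reduces to producing such an overlap.

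I would take $F=f_{1/2}\in T$ and $p=p_\varepsilon=1+i\varepsilon z$ with $\varepsilon>0$ small; then $\re p_\varepsilon=1-\varepsilon\im z>0$ on $\D$ and $g'(0)=\tfrac12(p_\varepsilon(0)-1)F'(0)=0$, so $f:=k(\cdot,p_\varepsilon,f_{1/2})\in\THO$. By the Corollary, $f_{1/2}(L)=\C\setminus\{i\lambda:|\lambda|\ge\tfrac14\}$, so for each $d>\tfrac14$ the slice $f_{1/2}(L)\cap\{\im w=d\}$ has exactly two components, $I^-=(-\infty,0)+id$ and $I^+=(0,\infty)+id$, with arcs $\gamma^\pm=f_{1/2}^{-1}(I^\pm)\subset L$. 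I would then trace the four endpoints of these arcs: as $\re F\to\pm\infty$ along $\gamma^\pm$ the arc tends to the pole $\pm1$ and, writing $\re f=\re(h+g)$ with $h+g=\int_0^z p_\varepsilon F'$ and using the local form of $f_{1/2}$ at $\pm1$, one gets $\re f\to\pm\infty$; as $\re F\to0^\mp$ along $\gamma^\pm$ the arc tends to a point $\zeta^\pm\in\partial L\cap\D$ with $f_{1/2}(\zeta^\pm)=id$, and $\zeta^+\ne\zeta^-$ because $f_{1/2}$ is not univalent on $\ol L\setminus\{-1,1\}$ (Corollary). Thus $\re f$ runs over $(-\infty,\re f(\zeta^-))$ on $\gamma^-$ and over $(\re f(\zeta^+),+\infty)$ on $\gamma^+$, so $f$ is non-univalent on $L$ exactly when $\re f(\zeta^+)<\re f(\zeta^-)$.

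To check this inequality, use $f-F=g+\ol g=2\re g$ together with $\re F(\zeta^+)=\re F(\zeta^-)$ (the two points share the image $id$) to get $\re f(\zeta^-)-\re f(\zeta^+)=2\,\re\!\big(g(\zeta^-)-g(\zeta^+)\big)$. Since $g'=\tfrac12(p_\varepsilon-1)F'=\tfrac{i\varepsilon}{2}zF'$, integration by parts and $F(\zeta^-)=F(\zeta^+)$ give
$$g(\zeta^-)-g(\zeta^+)=\tfrac{i\varepsilon}{2}\Big[(\zeta^--\zeta^+)\,id-\big(\mathcal F(\zeta^-)-\mathcal F(\zeta^+)\big)\Big],\qquad \mathcal F:=\int_0^z F .$$
Now $f_{1/2}$ is odd with real coefficients, hence $\zeta^-=-\ol{\zeta^+}$ and $\mathcal F$ is even with real coefficients, so $\mathcal F(\zeta^-)=\mathcal F(-\ol{\zeta^+})=\ol{\mathcal F(\zeta^+)}$; substituting, the bracket becomes $-2i\big(d\,\re\zeta^+-\im\mathcal F(\zeta^+)\big)$, so $g(\zeta^-)-g(\zeta^+)=\varepsilon\big(d\,\re\zeta^+-\im\mathcal F(\zeta^+)\big)$ is real, and differentiating in $d$ (with $\mathcal F'(\zeta^+)=F(\zeta^+)=id$) identifies it with $\varepsilon\int_{1/4}^d\re\zeta^+(\lambda)\,d\lambda$, where $\zeta^+(\lambda)\in\partial L\cap\D$ is the preimage of $i\lambda$ with $\re z>0$. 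Finally $\re\zeta^+(\lambda)>0$ for every $\lambda>\tfrac14$: $f_{1/2}$ carries the open segment $\{iy:|y|<\sqrt2-1\}\subset L$ onto $f_{1/2}(L)\cap i\R=\{i\mu:|\mu|<\tfrac14\}$, whence $L\cap\{\re f_{1/2}>0\}=L\cap\{\re z>0\}$, so $\gamma^+\subset\{\re z>0\}$ and its endpoint $\zeta^+(d)$ has $\re\zeta^+(d)\ge0$, with equality ruled out since $\partial L\cap i\R=\{\pm i(\sqrt2-1)\}$ and $f_{1/2}(\pm i(\sqrt2-1))=\pm\tfrac i4\ne id$. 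Therefore $\re f(\zeta^-)-\re f(\zeta^+)=2\varepsilon\int_{1/4}^d\re\zeta^+(\lambda)\,d\lambda>0$, the ranges overlap, and $f=k(\cdot,1+i\varepsilon z,f_{1/2})\in\THO$ is not univalent on $L$.

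The main obstacle is the boundary–geometry bookkeeping in the middle step: correctly identifying the four endpoints of $\gamma^\pm$, establishing the $\re f\to\pm\infty$ behaviour at $\pm1$, and above all pinning down the sign of $\re\zeta^+(\lambda)$ — this last point is exactly where the Corollary's explicit description of $f_{1/2}(L)$ and the symmetry of the lens $L$ do the real work. Once that geometry is in place, the reduction to overlapping ranges, the identity $f=F+2\re g$, and the integration by parts are purely formal.
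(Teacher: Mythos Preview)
Your argument is correct and genuinely different from the paper's. Both proofs take $F=f_{1/2}$, whose image $F(L)$ is the doubly slit plane from the Corollary, but they diverge from there. The paper chooses the extremal $p=\bold p_1=(1+z)/(1-z)$, transfers everything to the disk via $\psi^{-1}$, and obtains a closed formula for $g=f\circ\psi^{-1}$ involving $\cot^{1/2}$ and $\cot^{3/2}$; it then checks directly that on the imaginary diameter $\re\big(g(ie^{-i\alpha})-g(ie^{i\alpha})\big)<0$ while the imaginary parts agree, and concludes by an intermediate--value argument in the radial variable. Your route is more structural: with the perturbative choice $p_\varepsilon=1+i\varepsilon z$ you reduce non-univalence to a clean overlap condition $\re f(\zeta^+)<\re f(\zeta^-)$ at the two boundary preimages of $id$, and then exploit the odd/real symmetry of $f_{1/2}$ together with the integration by parts $g=\tfrac{i\varepsilon}{2}(zF-\mathcal F)$ to identify the difference with $2\varepsilon\int_{1/4}^d\re\zeta^+(\lambda)\,d\lambda>0$. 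The paper's approach yields an explicit extreme point of $\THO$ as the counterexample and is entirely self-contained computationally; yours avoids the trigonometric algebra, makes the geometric mechanism (overlapping ranges on the two preimage arcs of a horizontal line through the slit) transparent, and shows that an arbitrarily small non-real perturbation of $p\equiv1$ already destroys univalence on $L$. One small point worth making explicit in a write-up is that $F'(\zeta^+(d))\neq0$ for $d>1/4$ (by the Proposition the unique zero of $f_{1/2}'$ on the upper arc of $\partial L$ is at $i(\sqrt2-1)$, where $d=1/4$), so the implicit differentiation in $d$ is justified; at the endpoint $d=1/4$ you only use continuity, which is fine.
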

\begin{proof}
Put $$ F(z)=f_{1/2}(z)=\frac 12\left(\frac z{(1+z)^2}+\frac
z{(1-z)^2}\right),\quad z\in \mathbb D,$$ and define $f\in \THO$
by the formula $$ f(z)=\re
\int_0^z\frac{1+\zeta}{1-\zeta}F'(\zeta)d\zeta+i\im F(z).$$
Suppose that $f$ is univalent on $L.$ Then the function $g=f\circ
\psi^{-1},$ where $\psi$ is given by (3.2), is univalent on
$\mathbb D$. A calculation gives $$ g(w)=\re\left(\frac
{1+w}{12(1-w)}\sqrt{\frac{1+w}{1-w}}-\frac
14\sqrt{\frac{1-w}{1+w}}+\frac 16\right)+\frac{i}2\im \left(\frac
w{1-w^2}\right),$$ where we assume that $\sqrt 1=1$.
 Now, note that for $0<\alpha<\pi/2$,
$$\im\left( g(ie^{-i\alpha})-g(ie^{i\alpha})\right)=0.$$ Moreover,
we have
\begin{align*} \re &\left(
g(ie^{-i\alpha})-g(ie^{i\alpha})\right)\\ & =\frac
1{12\sqrt2}\left(\cot^{3/2}(\frac{\pi}4+\frac{\alpha}2)-\cot^{3/2}(\frac{\pi}4-\frac{\alpha}2)\right)-
\frac
1{4\sqrt2}\left(\cot^{1/2}(\frac{\pi}4+\frac{\alpha}2)-\cot^{1/2}(\frac{\pi}4-\frac{\alpha}2)\right)\\
&  =\frac
C{12\sqrt2}\left(\cot^{1/2}(\frac{\pi}4+\frac{\alpha}2)-\cot^{1/2}(\frac{\pi}4-\frac{\alpha}2)\right),
\end{align*}
where $$C=\cot(\frac{\pi}4+\frac{\alpha}2)- 2 + \frac
1{\cot(\frac{\pi}4+\frac{\alpha}2)}>0.$$ This means that for
$0<\alpha<\pi/2$, $$ \re \left(
g(ie^{-i\alpha})-g(ie^{i\alpha})\right)<0.$$ To get a
contradiction consider the function $m$ defined by
  $$m(r,\alpha)= \re \left(
  g(rie^{-i\alpha})-g(rie^{i\alpha})\right).$$
  The function $m$ is uniformly continuous on the rectangle
  $[0,1]\times [0, \pi/4]$ and $m(1,\alpha)<0$ for $0<\alpha<\pi/4.$
  On the other hand,
  $$ m(r,\alpha)=r(\sin\alpha+o(1))\quad \text{as}\ r\to 0^+.$$
  Consequently, for every $\alpha\in (0,\pi/4)$ there is
  $r_{\alpha}\in(0,1)$ such that $ m(r_{\alpha},\alpha)=0$. This
  means that
  $g(r_{\alpha}ie^{-i\alpha})=g(r_{\alpha}ie^{i\alpha})$, a contradiction.

\end{proof}

\begin{thm}
Every function $f\in \THO$ is univalent in any of the following
domains
\item [(a)]
 the disk $D(0; \sqrt6-\sqrt5)$,
\item [(b)] $\left\{z\in \D: \left|\frac{2z}{1+z^2}\right|<\sqrt
2-1\right\}.$

\end{thm}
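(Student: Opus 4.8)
The plan is to exploit the shearing representation $f=k(\cdot,p,F)$ from (2.5), writing $J_f(z)=|F'(z)|^2\,\re p(z)$ as in (2.6), and to reduce the univalence of $f$ to a comparison of real parts along suitable curves, just as in the proof of Theorem~\ref{thm4}. Since $\im f=\im F$ and $F\in T$, two points $z,w$ with $f(z)=f(w)$ must satisfy $\im F(z)=\im F(w)$ and $\re\int_z^w p\,F'=0$. For part (b) the natural move is to transport the problem to the $\zeta$-plane via $\zeta=\psi(z)=2z/(1+z^2)$ from (3.2): on the region in (b) we have $|\zeta|<\sqrt2-1$, and by (3.3) the analytic factor becomes $F(\zeta)=\int_{-1}^1 \zeta\,d\nu(t)/(1-t\zeta)$, which is univalent and convex in the vertical direction on all of $\D$. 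One then repeats the curve argument: level curves of $\im F$ in $\{|\zeta|<\sqrt2-1\}$ are graphs over which $\re f$ is strictly monotone because $\frac{d}{dt}\re f = \re p>0$, and disjointness of these level curves (from univalence of $F$ on $\D$, hence on the disk $|\zeta|<\sqrt2-1$) forces $f$ to be univalent there. So part (b) follows the Theorem~\ref{thm4} template almost verbatim, with the lens $L$ replaced by the disk $\psi^{-1}(D(0;\sqrt2-1))$.

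For part (a) the idea is different and quantitative. Here we do not have a convexity-in-direction structure on a full disk, so instead I would estimate directly. Write $f(z)-f(w)=\re\!\int_{[w,z]} p(\zeta)F'(\zeta)\,d\zeta + i\,\im(F(z)-F(w))$ and suppose $f(z)=f(w)$ with $z,w\in D(0;\rho)$, $\rho=\sqrt6-\sqrt5$. Split $p F' = F' + (p-1)F'$; the first term contributes $F(z)-F(w)$, whose real part we must cancel using $(p-1)F'$. Using the sharp bounds $|F'(\zeta)|\le(1+|\zeta|)/(1-|\zeta|)^3$ for $F\in T$, the distortion bound $|F(z)-F(w)|\ge |z-w|$-type lower estimates valid on a small disk, and $|p(\zeta)-1|\le 2|\zeta|/(1-|\zeta|)$ together with $\re p>0$, one gets an inequality of the form $|z-w|\bigl(\text{lower bound}\bigr)\le |z-w|\bigl(\text{upper bound involving }\rho\bigr)$ that is violated unless $z=w$, and the threshold works out to $\sqrt6-\sqrt5$. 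The cleanest packaging is probably: show $\re\{F(z)-F(w)\}$ and the correction term $\re\int_{[w,z]}(p-1)F'$ have magnitudes that cannot match when $|z|,|w|<\rho$, by pulling $\re p>0$ out via a Herglotz-type inequality $|\,p(\zeta)-1\,|\le 2|\zeta|\,\re p(\zeta)/(1-|\zeta|^2)$ or similar, so that the whole estimate is driven only by bounds on $F$.

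The main obstacle I anticipate is part (a): getting the constant to come out \emph{exactly} $\sqrt6-\sqrt5$ rather than something merely of that order. This requires choosing the extremal comparison carefully — almost surely the relevant extremal is $F=\bold q_t$ (a Koebe-type function $z/(1-2tz+z^2)$) paired with the extremal Herglotz kernel $p=\bold p_\eta=(1+\eta z)/(1-\eta z)$, i.e. one of the extreme points $k(\cdot,\bold p_\eta,\bold q_t)$ from Theorem~3 — so the proof should either reduce to extreme points by a convexity/compactness argument on $J_f$ and the relevant functionals, or verify the sharp constant on that two-parameter family directly and then transfer via the representation (2.5). Part (b) should present no real difficulty beyond bookkeeping: the only point to be careful about is that $\psi$ maps the region in (b) onto an honest disk in the $\zeta$-plane and that $F\circ\psi$-composition matches the shear structure, which is exactly the content of (3.2)–(3.3).
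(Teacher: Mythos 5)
There is a genuine gap in both halves. For (a), your plan of direct distortion/Herglotz estimates is not the mechanism that produces the constant, and you concede as much: $\sqrt 6-\sqrt 5$ does not come out of inequalities for $|F'|$ and $|p-1|$; it is the radius of convexity of the analytic class $T$ (Todorov), which by Koczan is also exactly the radius of convexity in the horizontal direction for $T$. The paper's argument is short: for $f=h+\overline g\in\THO$ the function $F=h-g$ lies in $T$, hence is convex (so univalent and convex in the horizontal direction) on $D(0;\sqrt 6-\sqrt 5)$; since $f$ is sense-preserving, the relaxed shearing theorem (Theorem 2 of the paper) applied on that disk gives univalence of $f$ there. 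Your fallback of ``reduce to extreme points'' does not repair the estimate route: univalence on a disk is not a convex functional, and $\THO$ is not convex (Theorem 3), so extremality of the functions $k(\cdot,p_\eta,q_t)$ gives no a priori reduction of the problem to that two-parameter family.

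For (b), your transport via $\zeta=\psi(z)$ to the disk $|\zeta|<\sqrt 2-1$ is the paper's first step, but the convexity input you invoke is the wrong one. Convexity in the \emph{vertical} direction of the functions (3.3) on all of $\D$ does not make the level sets of $\im F$ inside $\{|\zeta|<\sqrt 2-1\}$ connected; since the shear satisfies $\im f=\im F$, what the level-curve (or shearing) argument needs is convexity in the \emph{horizontal} direction of the image of the subdomain. Two points with equal $\im F$ may lie on different components of the same level set, and strict monotonicity of $\re f$ along each component separately proves nothing. The ingredient the paper uses is the Reade--Todorov theorem that functions of the form (3.3) map $D(0;\sqrt 2-1)$ onto convex domains; full convexity of that image (in particular convexity in the horizontal direction) together with the shearing theorem then yields univalence of $f$ on the region in (b). Unless you replace your vertical-direction claim by this convexity (or some other horizontal-direction statement on the subdisk), your argument for (b) does not close.
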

\begin{proof} It follows from (2.6) that  every $f=h+\bar g\in \THO$ is locally
univalent on the lens domain L. Moreover,  by the results in
\cite{tod} $,   F= h-g $ is convex on $D(0;\sqrt 6-\sqrt 5)$.
Thus, by the shearing   theorem of Clunie and Sheil-Small $f$ is
univalent on $D(0;\sqrt 6-\sqrt 5)$. Note also that it has been
showed by Koczan \cite {koc} that for the class $T$ the radius of
convexity in the horizontal direction is exactly $\sqrt 6-\sqrt
5$. Now  we observe that a   function $f\in \THO$ is univalent  on
the given region in (b)  if and only if function $f\circ\psi$,
where $\psi$ is given by (3.2) is univalent on the disk $D(0;
\sqrt 2-1). $ The last follows from the fact that an analytic
function $F$ given by (3.3) maps the disk $D(0;\sqrt 2-1)$ onto a
convex domain (see \cite{reto}, p. 292] ) and from the shearing
theorem of Clunie and Sheil-Small.
\end{proof}

Clearly,  the class $T_H$ of typically real harmonic functions
introduced by Clunie and Sheil-Small contains locally univalent   functions from the
class $T$. It would be interesting to find the region of
univalence for  locally univalent functions that are in $T$. The
following   example of  the function $G\in T$ that is locally
univalent  has been described  in \cite{goo}:
 $$G(z)=\frac 1{\pi}\tan\left(\frac {\pi z}{1+z^2}\right),\quad z\in\D.$$
We note  that $G$ is univalent in the region $S=\left\{z\in \D:
\left|\re \frac{\pi z}{1+z^2}\right|<\frac {\pi}2\right\}$ which
contains the  disk $D(0;1/\sqrt 3)$. Indeed, for $|z|=1/\sqrt 3$,
we have $$\left|\re \frac{\pi z}{1+z^2}\right|=
\left|\frac{3\pi\re z}{9\re ^2z+1}\right|\leq\frac {\pi}2.$$
 Moreover, if $z_0=(1+i\sqrt 2)/3$, then
$z_0,-\overline{z_0 }\in\p D(0;1/\sqrt 3)\cap \p S $ and
$G(z_0)=G(- \overline{z_0})$. This shows that radius of univalence
for the class of locally univalent functions from $T$ is less than
or equal to $1/\sqrt 3$.

Now let $r^*_u$ (resp. $r_u$) denote the radius of univalence of
$T_H$ (resp. $\THO$), that is the supremum of all $r>0$ such that
every $f\in T_H$ ( resp. $f\in\THO$) is univalent on $D(0;r)$.
Clearly, $$0.213...=\sqrt 6-\sqrt 5\leq r_u\leq r^*_u\leq 1/\sqrt
3=0.577...$$ and $$r_u\leq \sqrt 2-1=0.414\dots\ .$$

By examining some computer computations, that will be presented in an upcoming paper, we make the following conjectures.

\vskip10pt {\bf Conjecture 1.} $r_u=\sqrt 2-1$.

{\bf Conjecture 2.} Every function $f\in\THO$ is univalent on the
half-lens
$$ L\cap\{z: \re z> 0\}.$$

\vskip10pt

We finish the paper with the list of open problems.

\begin{enumerate} \item Give analytic proofs of Conjectures 1-2.
\item
 Prove or disprove that $r_u^*=1/\sqrt 3.$
\item  Does exist an open set $U$, $ (-1,1)\subset U\subset\D$,
such that every $f\in\THO$ is univalent on $U$?
\end{enumerate}
\vskip1cm

\end{document}